\theoremstyle{plain}
\newtheorem{prop}{Proposition}
\newtheorem{thm}[prop]{Theorem}
\newtheorem{coro}[prop]{Corollary}
\newtheorem{lemm}[prop]{Lemma}
\newtheorem{conj}[prop]{Conjecture}
\newtheorem{prob}[prop]{Problem}
\newtheorem*{thm-int}{Theorem}
\theoremstyle{remark}
\newtheorem{exam}[prop]{Example}
\newtheorem{rema}[prop]{Remark}
\theoremstyle{definition}
\newtheorem{defn}[prop]{Definition}
\def\NS{{\rm NS}}
\def\Rat{{\rm Rat}}
\def\Unirat{{\rm Unirat}}
\def\Aut{{\rm Aut}}
\def\no{\noindent}
\newcommand{\Ker}{{\rm Ker}}
\def\lra{\longrightarrow}
\def\ra{\rightarrow}
\def\A{{\mathbb A}}
\def\F{{\mathbb F}}
\def\P{{\mathbb P}}
\def\Q{{\mathbb Q}}
\def\Z{{\mathbb Z}}
\def\N{{\mathbb N}}
\def\Br{{\rm Br}}
\def\SL{{\rm SL}}
\def\GL{{\rm GL}}
\def\PGL{{\rm PGL}}
\def\Bir{{\rm Bir}}
\def\Aut{{\rm Aut}}
\def\Syl{{\rm Syl}}
\begin{document}
\title[Noether's problem]{Noether's problem and descent}

\author{Fedor Bogomolov}
\address{Courant Institute of Mathematical Sciences, N.Y.U. \\
 251 Mercer str. \\
 New York, NY 10012, U.S.A.}
\address{National Research University Higher School of Economics, 
Russian Federation \\
AG Laboratory, HSE \\ 
7 Vavilova str., Moscow, Russia, 117312}

\email{bogomolo@cims.nyu.edu}

\author{Yuri Tschinkel}
\address{Courant Institute of Mathematical Sciences, N.Y.U. \\
 251 Mercer str. \\
 New York, NY 10012, U.S.A.}
\address{Simons Foundation\\
160 Fifth Av. \\
New York, NY 10010, U.S.A.}
\email{tschinkel@cims.nyu.edu}        

\keywords{Rationality, fibrations}

\begin{abstract}
We study Noether's problem from the perspective of torsors under linear algebraic groups and descent. 
\end{abstract}

\maketitle

\setcounter{section}{0}       
\section*{Introduction}
\label{sect:introduction}

This note is inspired by the theory of universal torsors developed by Colliot-Th\'el\`ene and Sansuc in connection with 
arithmetic problems on del Pezzo surfaces \cite{CTS}. This theory associates to a geometrically rational surface $X$ over a field $k$, 
with $X(k)\neq \emptyset$, a torsor 
\begin{equation}
\label{eqn:tors}
\pi:\mathcal T\stackrel{T}{\lra} X,
\end{equation}
where $T$ is the N\'eron-Severi torus of $X$, i.e., the character group of $T_{\bar{k}}$ is isomorphic, as a Galois module,  
to the geometric N\'eron-Severi lattice $\NS(X_{\bar{k}})$ of $X$. 
The torsor is viewed as a {\em descent} variety. Basic arithmetic problems on $X$, 
such as the Hasse Principle and Weak Approximation, are reduced 
to the following geometric conjecture: $\mathcal T$ is {\em rational} over $k$. 
The gist of this conjecture is that on $\mathcal T$, the arithmetic complexity of $X$ 
is {\em untwisted}: while $X$ may have nontrivial (algebraic) Brauer group,
it is eliminated via passage to a universal torsor. 
The conjecture implies in particular that $X$ is unirational over $k$, which is an open problem
for del Pezzo surfaces of degree 1.

In higher dimensions, unirational varieties may have nontrivial transcendental Brauer group, 
or more generally, nontrivial higher 
unramified cohomology.   
Examples are conic bundles over rational surfaces considered by Artin--Mumford \cite{AM}, quadric bundles \cite{CTO}, or 
Brauer--Severi bundles. 
Our motivation was to understand whether or not these obstructions to 
stable rationality can be untwisted via passage to fibrations as in \eqref{eqn:tors}. 

\begin{defn} 
\label{defn:tower}
A variety $X$ over a field $k$ 
admits a {\em rational tower} if there exists a
sequence of dominant rational maps
\begin{equation}
\label{eqn:tower}
X_n \stackrel{\pi_n}{\lra} X_{n-1} \ra  \cdots \ra X_1    \stackrel{\pi_1}{\lra} X_0:=X,
\end{equation}
over $k$, such that 
\begin{itemize}
\item[(1)] the {\em source} of the tower, $X_n$, is rational over $k$,
\item[(2)] the generic fiber of $\pi_i$ is geometrically rational and irreducible, over the function field $k(X_{i-1})$, for all $i$.   
\end{itemize}
We say that $X$ admits a {\em toric rational tower}, if in addition
\begin{itemize}
\item[(2')]   the generic fiber of $\pi_i$ is birational to a torsor under an algebraic torus, over the function field $k(X_{i-1})$, for all $i$.   
\end{itemize}
\end{defn}

\begin{conj}
\label{conj:main}
Let $X$ be a unirational variety over $k$. Then 
$X$ admits a rational tower.
\end{conj}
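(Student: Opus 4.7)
The plan is to combine a Galois closure of a unirational cover with the no-name lemma and an induction on the dimension of the generic fiber. By unirationality, pick a dominant rational map $p\colon Y \dashrightarrow X$ with $Y = \P^N$ rational. Via Stein factorization $p$ factors as $Y \to X' \to X$, where $X' \to X$ is finite and $Y \to X'$ has geometrically irreducible generic fiber $F$; this $F$ is geometrically unirational over $k(X')$ but a priori not geometrically rational.

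First I would eliminate the finite step $X' \to X$, which violates the irreducibility condition of the definition. Let $L$ be the Galois closure of $k(X')/k(X)$ with group $G$, let $\tilde X'$ be the corresponding $G$-cover of $X$, and let $V$ be a faithful linear $G$-representation. The no-name lemma supplies a birational equivalence $(V \times \tilde X')/G \sim V \times X$ over $X$, so $V \times X \to X$ is a tower step with geometrically rational irreducible fiber $V$. This reduces the problem to constructing a rational tower over $V \times X$, which is unirationally covered by $V \times Y$, and at the cost of enlarging the ambient Galois structure that we can exploit.

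Second I would induct on $\dim F$. In characteristic zero Castelnuovo--L\"uroth handles $\dim F \leq 2$, in which case $F$ is geometrically rational and $Y \to X'$ is already the top step of a rational tower. For $\dim F \geq 3$, the inductive hypothesis applied to $F$ viewed as a unirational variety over $k(X')$ yields a tower $F_m \to \cdots \to F_0 = F$ of $k(X')$-varieties with $F_m$ rational over $k(X')$; spreading this out over a neighborhood of the generic point of $X'$ and composing with the step produced above yields a rational tower for $X$. In the solvable subregime one can refine further: a composition series of $G$ with cyclic quotients produces, via Kummer and Artin--Schreier theory, fibers that are geometrically tori, thereby upgrading the construction to a toric rational tower in the sense of condition~(2').

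The main obstacle is making the induction genuinely well-founded. The fiber dimension $\dim F$ can equal or exceed $\dim X$, so an induction on $\dim X$ alone is circular: one must find a more refined invariant---such as the group-theoretic complexity of $G$ above, or the gap between unirational and rational dimension---that strictly decreases at each stage. The non-solvable part of $G$ is precisely the regime where Noether's problem is open, and there the above tower is forced to contain a step whose fiber is a torsor under a non-abelian simple group; making such a step geometrically rational (or replacing it by a refinement built from representation-theoretic data of the simple factor) is the heart of the difficulty and is the kind of problem to which the paper's descent framework is directed.
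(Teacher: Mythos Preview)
The statement is a \emph{conjecture}; the paper does not prove it in general, only for linear quotients $V/G$ with $G$ a special solvable group or a central extension of an abelian $\ell$-group (Proposition~\ref{prop:nil}, Corollary~\ref{coro:prop16}). Your proposal is a heuristic outline rather than a proof, and you rightly flag in your final paragraph that the induction on $\dim F$ is not well-founded. Two further gaps persist even if one grants the induction.

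First, the no-name step is circular. You correctly observe that $(V\times\tilde X')/G\sim_k V\times X$ and then declare the problem reduced to building a rational tower over $V\times X$. But $V\times X\to X$ is already a valid tower step, so $V\times X$ admits a rational tower if and only if $X$ does: nothing has been gained. The Galois cover $\tilde X'$ has been absorbed into the no-name identification and supplies no remaining leverage; in particular the $G$-action does not lift to $Y$ or to $Y\times_{X'}\tilde X'$, so you cannot propagate the tower data for $X'$ through the quotient.

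Second, even granting the inductive hypothesis over $k(X')$, the spread-out tower does not have a rational top. If $F_m\to\cdots\to F_0=F$ is a rational tower for $F$ with $F_m$ rational over $k(X')$, then the spread-out top $\mathcal F_m$ is $k$-birational to $\P^d\times X'$. For this to sit atop a rational tower for $X$ you need $X'$ to be stably $k$-rational, which is precisely the kind of assertion in question; and $\dim X'=\dim X$, so there is no dimension drop. Even in your base case $\dim F\le 2$, Castelnuovo only yields a one-step rational tower $Y\to X'$ for $X'$, not for $X$; the passage through the finite map $X'\to X$ is the entire difficulty, and it is exactly where Noether's problem enters. The paper's actual results sidestep this by restricting to groups $G$ whose structure (iterated split extensions with abelian kernels, wreath products) allows one to build the tower by hand, rather than by any general descent along a unirational parametrization.
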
 

This conjecture is motivated by topology: every continuous map is homotopy equivalent to a fibration with constant fiber. 
In the algebraic-geometric context, the conjecture implies in particular that {\em unramified cohomology} of the source of the tower, $X_n$, in Definition \ref{defn:tower}, is trivial. Of course, one can trivialize a given 
unramified cohomology class (with finite coefficients) 
via passage to a tower with geometrically rational fibers, as in \eqref{eqn:tower}.  
Indeed, let $K=k(X)$ be the function field of an algebraic variety over a field $k$, which contains all roots of unity.  
By the Bloch-Kato conjecture, proved by Voevodsky, the subring 
$$
\oplus_{i\ge 2} H^i(K)\subset H^*(K)
$$
of Galois cohomology of $K$, with finite coefficients, equals the ideal generated by $H^2(K)$. 
Since elements of $H^2$ are trivialized on the corresponding Brauer-Severi variety, 
we can trivialize an arbitrary finite set of 
Galois cohomology classes via passage to a tower as in \eqref{eqn:tower}:
$$
Y_m\ra Y_{m-1} \ra \cdots \ra Y_1\ra Y_0=X,
$$
where the generic fibers are forms of projective spaces. 
However,  a direct application of this construction 
does not imply Conjecture~\ref{conj:main}, as we cannot assert the rationality of $Y_m$. 
In fact, new unramified classes
may appear in the process, and we cannot even ensure that $Y_m$
has trivial unramified cohomology.

To motivate Conjecture~\ref{conj:main} from the perspective of algebra, consider the following
important class of unirational varieties: quotients $V/G$, where
$G$ is a finite group and $V$ a (finite-dimensional) faithful linear representation of $G$ over $k$. 
In \cite{BT-univer} we showed that, over $k=\bar{\F}_p$,  these 
varieties are {\em universal} for unramified cohomology:
every unramified cohomology class of an algebraic variety over $k$ is
induced from $V/G^c$, where $G^c$ is a central extension of an abelian group (see Section~\ref{sect:background} for 
more details). Thus we expect that such quotients are universal from the birational point of view as well. 
For groups of this type, and a wide class of solvable groups $G$, we prove:
\begin{itemize}
\item $V/G$ admits a toric rational tower.
\end{itemize}
Its source $X_n$ is a $k$-rational variety, untwisting all unramified cohomology of $V/G$.

\

\no
{\bf Acknowledgments.} 
We are thankful to B. Bloom-Smith for useful comments. 
This work was partially supported by 
Laboratory of Mirror Symmetry NRU HSE, RF grant 14.641.31.0001.
The first author was funded by
the Russian Academic Excellence Project `5-100'.
The first author was also supported by a Simons Fellowship 
and by the EPSRC program grant EP/M024830. 
The second author was partially supported by the NSF grant  1601912.

\section{Background}
\label{sect:background}

Let $X$ be a smooth projective geometrically rational variety over a field $k$. Among obstructions to $k$-rationality 
are the absence of $k$-rational points or the nontriviality of the {\em algebraic} Brauer group
$$
\Br(X)/\Br(k).
$$
More generally, we may consider {\em unramified cohomology} 
$$
H^i_{nr}(K,\mu_m^{\otimes  i-1}),     \quad H^i_{nr}(K,\Q/\Z(i-1)),   \quad m\in \N,
$$
(see \cite{Bog87}, \cite{CTO} for definitions and basic properties). 
These groups are birational invariants which vanish when $i > \dim(X)$ or when $X$ is 
$k$-rational. 
For $X$ smooth and projective, we have
$$
\Br(X)[m]=H^2_{nr}(K, \mu_m), 
$$
where $\mu_m$ stands for $m$-th roots of 1.  
Below, we omit the coefficients, when they are clear from the context. 
 
\begin{conj}
\label{conj:bounded}
Let $X$ be a unirational variety over an algebraically closed field of characteristic zero. 
Then its unramified cohomology is finite.
\end{conj}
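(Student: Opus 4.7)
My plan combines a standard restriction-corestriction argument with Bloch--Ogus-type finiteness of \'etale cohomology. First, unirationality of $X$ provides a finite extension $K:=k(X)\hookrightarrow L:=k(t_1,\ldots,t_n)$ of some degree $d$. A standard computation based on Faddeev's exact sequence for purely transcendental extensions, together with $H^i(k,\cdot)=0$ for $i\geq 1$ (since $k$ is algebraically closed of characteristic zero), shows that $H^i_{nr}(L,\mu_m^{\otimes i-1})=0$ for all $i\geq 1$ and all $m\geq 1$.

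Next, I would pass to smooth projective models $\tilde Y\to\tilde X$, using resolution of singularities, realising the inclusion $K\hookrightarrow L$ as a generically finite morphism of degree $d$. Functoriality produces restriction and corestriction maps on unramified cohomology whose composition is multiplication by $d$; combined with the vanishing on $L$, this forces $d\cdot H^i_{nr}(K,\mu_m^{\otimes i-1})=0$ for every $m$. In particular, $H^i_{nr}(K,\Q/\Z(i-1))$ is $d$-torsion and therefore equal to $H^i_{nr}(K,\mu_d^{\otimes i-1})$.

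To conclude, by the Bloch--Ogus theorem the group $H^i_{nr}(K,\mu_d^{\otimes i-1})$ is identified with $H^0_{Zar}(\tilde X,\cH^i)$, where $\cH^i$ is the Zariski sheafification of $U\mapsto H^i_{et}(U,\mu_d^{\otimes i-1})$; this is a subquotient of $H^i_{et}(\tilde X,\mu_d^{\otimes i-1})$. Since $\tilde X$ is smooth and projective over an algebraically closed field of characteristic zero, Artin's comparison theorem identifies the latter with the singular cohomology group $H^i(\tilde X(\C),\Z/d)$, which is finite. The conjectured finiteness of $H^i_{nr}(K,\Q/\Z(i-1))$ follows.

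The main obstacle is making the corestriction step rigorous: one must verify that corestriction preserves unramifiedness and is compatible with the residue maps at divisorial valuations of $K$. This typically requires refining the model $\tilde Y\to\tilde X$ (for instance by further blow-ups) so that the map becomes flat with simple normal crossing branch locus, and then invoking functoriality of the Bloch--Ogus/Gersten resolution. Once this geometric set-up is fixed, the multiplication-by-$d$ identity and the identification with \'etale sheaf cohomology are routine; the substantive content of the conjecture really is concentrated in the interplay between finite flat descent and unramifiedness.
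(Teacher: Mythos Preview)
The statement you are attempting to prove is labelled \textbf{Conjecture} in the paper, and the paper does not prove it; immediately after stating it, the authors record that finiteness of $H^i_{nr}(X,\Q/\Z(i-1))$ is known only for $i=2$ (classically) and $i=3$ (Colliot-Th\'el\`ene--Kahn). So there is no ``paper's own proof'' to compare with, and a complete argument would settle an open problem.

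Your steps (1)--(4) are correct and well known: unirationality plus restriction--corestriction does force $H^i_{nr}(K,\Q/\Z(i-1))$ to be $d$-torsion, where $d$ is the degree of a dominant rational map $\P^n\dashrightarrow X$, and corestriction does preserve unramifiedness (this is in Colliot-Th\'el\`ene's survey on unramified cohomology). This reduces the conjecture to finiteness of $H^i_{nr}(K,\mu_d^{\otimes i-1})$ for one fixed $d$.

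The genuine gap is in step (5). It is not true that $H^0_{Zar}(\tilde X,\cH^i)$ is a subquotient of $H^i_{et}(\tilde X,\mu_d^{\otimes i-1})$. In the Bloch--Ogus (coniveau) spectral sequence
\[
E_2^{p,q}=H^p_{Zar}(\tilde X,\cH^q)\ \Longrightarrow\ H^{p+q}_{et}(\tilde X,\mu_d^{\otimes i-1}),
\]
the unramified cohomology is the $E_2$-term $E_2^{0,i}$, whereas only the $E_\infty$-term $E_\infty^{0,i}$ is a quotient of $H^i_{et}(\tilde X)$. The edge map $H^i_{et}(\tilde X)\to H^i_{nr}(K)$ has image $E_\infty^{0,i}\subset E_2^{0,i}$, and the cokernel is controlled by the differentials $d_r\colon E_r^{0,i}\to E_r^{r,i-r+1}$, whose targets are subquotients of groups such as $H^2_{Zar}(\tilde X,\cH^{i-1})$. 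These Zariski cohomology groups are closely tied to Chow groups and are not known to be finite for $i\ge 4$; this is exactly the obstruction that makes the conjecture open. For $i=2$ the edge map is surjective (giving $\Br(\tilde X)[d]$), and for $i=3$ Colliot-Th\'el\`ene--Kahn control the cokernel via $CH^2$, but no such control is available in higher degree. So your final sentence, that ``the substantive content of the conjecture really is concentrated in the interplay between finite flat descent and unramifiedness,'' misidentifies the difficulty: descent is routine, and the hard part is precisely the finiteness of $H^i_{nr}$ with fixed finite coefficients.
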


Finiteness of $H^i_{nr}(X,\Q/\Z(i-1))$, for unirational $X$, is known for:
\begin{itemize}
\item $i=2$, classically,  
\item $i=3$ \cite[Proposition 3.2]{CT-Kahn}.
\end{itemize}

When $X(k)\neq \emptyset$, the theory of Colliot-Th\'el\`ene--Sansuc
provides a fibration 
$$
\pi: \mathcal T\stackrel{T}{\lra} X
$$ 
as in \eqref{eqn:tors}, such that
\begin{itemize}
\item the generic fiber of $\pi$ is a principal homogeneous space under an algebraic torus $T$ and 
\item the total space $\mathcal T$ is, {\em conjecturally}, a rational variety \cite[Conjecture H1, Section 2.8]{CTS}. 
\end{itemize}
Special cases of this are known, e.g., 
when $X$ is admits a conic bundle over $k$, with 4 degenerate fibers \cite{CT-Sko}. As evidence for 
this rationality conjecture, one has the  
proof that, over $k$ of characteristic zero, $H^2_{nr}(\mathcal T)$ is trivial \cite[Theorem 2.1.2]{CTS}. A more recent consistency check is \cite{cao}, 
where it is shown that 
\begin{itemize}
\item when $X$ is a del Pezzo surface of degree $\ge 2$, the  nontrivial part of $H^3_{nr}(\mathcal T)$ is finite and 2-primary. 
\end{itemize}

We now turn to unirational varieties which are not necessarily geometrically rational, which may have nontrivial {\em transcendental} Brauer group. 
Examples appeared in the context of {\em Noether's problem}: 

\begin{prob}[Noether]
Let $V$ be a faithful representation of a finite group $G$ over $k$. Is  $V/G$ (stably) rational? 
\end{prob}

Noether's problem has a negative solution: counterexamples are based on explicit computations of unramified cohomology, which is 
essentially a combinatorial problem, in terms of the structure of Sylow subgroups of $G$.  
Groups  $G$ with nontrivial $H^2_{nr}(V/G)$ were constructed in 
\cite{saltman}, \cite{Bog-Izv-87}, and \cite{mor}; with trivial $H^2_{nr}(V/G)$ but nontrivial $H^3_{nr}(V/G)$ in \cite{peyre} and \cite{hoshi}.

On the other hand, unramified cohomology is trivial for actions of many classical finite groups.  
In some, but not all, of these cases, we have proofs of (stable) rationality (see, e.g., \cite{B-stable}, \cite{BBB}). 

Our goal in the following sections will be to {\em untwist} unramified cohomology of $V/G$ by constructing 
rational towers as in Definition~\ref{defn:tower}. Of particular importance are solvable groups $G$. Indeed, in  
\cite{BT-univer} we proved a {\em universality} result:

\begin{thm}
\label{thm:univer}
Let $X$ be an algebraic variety of dimension $\ge 2$ over $k=\bar{\mathbb F}_p$, $\ell\neq p$ a prime,  
and 
$$
\alpha\in H^i_{nr}(k(X), \Z/\ell^n)
$$ 
an unramified class. 
Then there exist finite-dimensional $k$-vector spaces $V_j, j\in J$, depending on $\alpha$, 
such that $\alpha$ is induced, via a rational map, from an unramified class 
in the cohomology of the quotient of 
$$
\mathbb P:=\prod_{j\in J} \mathbb P(V_j)
$$ 
by a finite abelian $\ell$-group $G^a$, 
acting projectively on each factor.
\end{thm}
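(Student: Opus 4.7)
\medskip

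\noindent\textbf{Proof plan.} I would prove Theorem~\ref{thm:univer} in three moves: decompose $\alpha$ into symbols via Bloch--Kato, realize each symbol as a pullback from a quotient $\mathbb{P}/G^a$ with $G^a$ abelian acting projectively, and assemble the per-symbol models into one product.

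First, since $k=\bar{\mathbb{F}}_p$ contains all $\ell^n$-th roots of unity, the norm residue isomorphism theorem (Bloch--Kato, proved by Voevodsky) applied to $K=k(X)$ identifies $H^i(K,\mu_{\ell^n}^{\otimes i})$ with the $i$-th Milnor K-theory modulo $\ell^n$. I would thus write $\alpha$ as a $\Z/\ell^n$-linear combination of symbols
\[
\{f_{s,1},\ldots,f_{s,i}\}, \qquad f_{s,j}\in K^*,
\]
indexed by a finite set of tuples $s$, and aim to realize each summand as the pullback of an unramified class.

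Next, for a single symbol $\{g_1,\ldots,g_i\}$, the rational map $(g_1,\ldots,g_i)\colon X\dashrightarrow(\mathbb{P}^1)^i$ pulls back the tautological symbol $\{t_1,\ldots,t_i\}$ to the given one. The tautological class is ramified on $(\mathbb{P}^1)^i$ itself, but I expect it to appear as the pullback, along the quotient projection $\mathbb{P}\to\mathbb{P}/G^a$, of an \emph{unramified} class on $\mathbb{P}/G^a$, where $\mathbb{P}=\prod_{j=1}^i\mathbb{P}(V_j)$ with each $V_j$ two-dimensional and $G^a=(\Z/\ell^n)^i$ acts projectively via a primitive character of its $j$-th factor on $\mathbb{P}(V_j)$. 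The lift of this projective action to linear representations of the $V_j$ is controlled by a multi-Heisenberg central extension $G^c\twoheadrightarrow G^a$, whose group cohomology in degree $i$ supplies the universal symbol class. To handle the full sum I would take $J$ to be the disjoint union of the per-symbol index sets, $G^a$ the product of the corresponding cyclic factors, and use all the functions $f_{s,j}$ to define a single rational map $X\dashrightarrow\mathbb{P}/G^a$; the sum of the per-symbol universal classes then pulls back to $\alpha$.

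The hard part will be the middle step: verifying that the class constructed on $\mathbb{P}/G^a$ is genuinely \emph{unramified}. In the stable-cohomology-of-finite-groups framework of \cite{Bog-Izv-87}, this reduces to checking that the restriction of the class to every cyclic subgroup of $G^c$ that can arise as the inertia of a divisorial valuation of $k(\mathbb{P}/G^a)$ vanishes. The assumption $\ell\neq p$ makes these inertia groups tame and cyclic, and I would engineer the projective characters on the $\mathbb{P}(V_j)$ precisely so that the restriction of the universal symbol class on each such cyclic subgroup vanishes. Carrying out this verification in arbitrary degree $i$, over $\bar{\mathbb{F}}_p$, was the content of our earlier paper \cite{BT-univer}, where the full argument resides.
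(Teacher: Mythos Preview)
The paper does not prove this theorem here; it is quoted from \cite{BT-univer}, so there is no in-paper proof to compare against. Your three-step outline (Bloch--Kato decomposition into symbols, a universal model for each symbol, verification of unramifiedness via stable cohomology of $G^c$) is indeed the skeleton of the argument in \cite{BT-univer}, and your final paragraph correctly locates the real work there.

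However, the concrete model you write down in the middle step is wrong, and this is a genuine gap, not just a detail. You take each $V_j$ two--dimensional and let the $j$-th cyclic factor of $G^a=(\Z/\ell^n)^i$ act on $\mathbb P(V_j)=\mathbb P^1$ by $t_j\mapsto \zeta t_j$. With that action the quotient $\mathbb P/G^a$ is simply $(\mathbb P^1)^i$ again, hence $k$-rational, and carries \emph{no} nontrivial unramified cohomology at all; the candidate class you would get from $H^i(G^a)$ via Hochschild--Serre is the ordinary symbol $\{s_1,\dots,s_i\}$ on $k(s_1,\dots,s_i)$, which is ramified along every coordinate hyperplane. So there is nothing unramified to pull back, and the argument collapses before you ever reach the ``hard part''.

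What makes the construction in \cite{BT-univer} work is precisely that the projective $G^a$-action on each $\mathbb P(V_j)$ is \emph{not} linearizable: the $V_j$ are irreducible representations of a genuine Heisenberg-type central extension $G^c$ of $G^a$ (so $\dim V_j$ is a power of $\ell$, typically $\gg 2$), and the obstruction class in $H^2(G^a)$ is nonzero. It is this nonlinearizability that forces the quotient $\mathbb P/G^a$ to be nonrational and to support the desired unramified classes. You mention the Heisenberg extension in passing, but your explicit choice of $V_j$ and of the action bypasses it entirely. If you rewrite the middle step with the $V_j$ taken as the irreducible $G^c$-modules attached to nondegenerate characters of the center, the rest of your outline is consistent with \cite{BT-univer}.
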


In other words, central extensions of abelian groups capture all unramified cohomology invariants.

\section{First properties}
\label{sect:first}

Throughout, $G$ is a linear algebraic group over a field $k$ and $V$ a finite-dimensional linear faithful representation of $G$ over $k$. 

Let $\Bir(k)$ be the set algebraic varieties over $k$, 
modulo $k$-birational equivalence, which we denote by $\sim_k$.  Let 
$$
\Rat(k)\subset \mathcal L\mathcal Q(k)\subset \mathcal G\mathcal Q(k)\subset \Unirat(k),
$$
be the classes of algebraic varieties over $k$ which are 
\begin{itemize}
\item $k$-rational, 
\item $k$-birational to $V/G$  (Linear Quotients), 
\item $k$-birational to $X/G$, where $X$ is a $k$-rational 
algebraic variety and $G$ is a subgroup of the group $\Bir\Aut(X)$ of $k$-birational automorphisms of $X$ (General Quotients), 
\item $k$-unirational, 
\end{itemize}
respectively. 
Our goal is to connect these classes, via passage to fibrations with geometrically rational generic fibers as in Definition~\ref{defn:tower}. 
We start with simple examples:

\begin{exam}
\label{exam:abelian}
Let $G$ be a finite abelian group, or an extension of a finite cyclic group by a finite abelian group. 
If $k$ is of characteristic coprime to $n:=|G|$ and 
contains $n$-th roots of 1 then $V/G$ is rational (see \cite[Theorem 6.1]{swan}). 

This can fail over nonclosed fields, even when $G$ is cyclic.  
Over $\Q$, there are counterexamples due to Swan, for $G=\Z/47\Z$ \cite{swan-inv}, \cite{lenstra}, \cite{plans}.
\end{exam}

\begin{exam}
\label{exam:vv}
Let $V$ be a faithful linear representation of a finite group $G$ over $k$, i.e., $G\hookrightarrow \GL(V)$. 
Then 
$$
V/G\sim_k\P(V)/G\times \P^1.
$$ 
\end{exam}

\begin{exam}
\label{exam:conn}
Let $G$ be a {\em connected} linear algebraic group and $V$ a faithful linear representation of $G$ over $k$. Then
$Y:=V/G$ admits a rational tower. Indeed, the total space of the corresponding 
(rational) fibration 
$$
V\ra V/G=Y
$$
is clearly $k$-rational, and the generic fiber is geometrically rational. 
\end{exam}

\begin{exam}
\label{exam:toric}
Toric varieties: a universal torsor of a toric variety $X_{\Sigma}$ over $k$ is given by
$$
\mathcal T_{\Sigma}=\A^n\setminus Z_{\Sigma},
$$
where $Z_{\Sigma}$ is a locally closed subvariety; we have
$$
X_{\Sigma}=\mathcal T_{\Sigma}/\!\!/T_{NS}, 
$$
where $T_{NS}$ is the N\'eron-Severi torus of $X$. Thus $X_{\Sigma}$ admits a toric rational tower. 
 \end{exam}

\begin{lemm}
Let $G$ be a finite group and $Y=V/G\in \mathcal L\mathcal Q(k)$. 
Assume that there exists an $X\in \Rat(k)$ with a generically free $G$-action, such that $X/G\in \Rat(k)$. Then 
there exists a fibration $Y_1\ra Y$ with geometrically rational generic fiber and $k$-rational $Y_1$.
\end{lemm}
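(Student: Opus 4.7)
The plan is to take $Y_1 := (V \times X)/G$ with $G$ acting diagonally, and exploit the two natural projections: $\pi : Y_1 \to V/G = Y$ and $q : Y_1 \to X/G$, induced by the $G$-equivariant projections $V \times X \to V$ and $V \times X \to X$.

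First, I would establish the $k$-rationality of $Y_1$ using $q$. Since the projection $V \times X \to X$ is a $G$-equivariant trivial vector bundle and $G$ acts generically freely on $X$, the no-name lemma (Bogomolov--Katsylo) yields a $k$-birational equivalence
$$
Y_1 = (V \times X)/G \sim_k V \times (X/G).
$$
As $V \cong \A^{\dim V}$ is $k$-rational and $X/G$ is $k$-rational by hypothesis, the right-hand side is $k$-rational; hence $Y_1 \in \Rat(k)$.

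Second, I would show that the generic fiber of $\pi$ is geometrically rational and irreducible. Let $F := k(V/G)$ and fix an algebraic closure $\ovl F$. To compute the geometric generic fiber of $\pi$, I lift a geometric point of $V/G$ to a geometric point $\bar v$ of $V$ --- generic freeness of $G$ on $V$ ensures it has trivial stabilizer --- and observe that the fiber of $V \times X \to V$ above $\bar v$ is $\{\bar v\} \times X = X_{\ovl F}$. Since the stabilizer of $\bar v$ is trivial, this coincides with the geometric generic fiber of $\pi$. As $X$ is $k$-rational, $X_{\ovl F}$ is $\ovl F$-rational, so the generic fiber of $\pi$ is geometrically rational; being geometrically integral, it is also irreducible.

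The main obstacle, modest as it is, is the invocation of the no-name lemma in the first step. If one prefers a hands-on argument, one restricts to a $G$-stable dense open $U \subseteq X$ over which $U \to U/G$ is Zariski-locally a principal $G$-bundle (possible because $X/G$ is $k$-rational, and in particular has a Zariski-dense set of $k$-points over a suitable extension); then $(V \times U)/G \to U/G$ is Zariski-locally a trivial vector bundle with fiber $V$, and in particular $k$-birational to $V \times (U/G)$.
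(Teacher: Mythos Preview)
Your argument is correct and is exactly the paper's approach: set $Y_1=(X\times V)/G$, use the projection to $X/G$ (a vector bundle, since $G$ acts linearly on $V$) to obtain $k$-rationality of $Y_1$, and use the projection to $V/G$ to see that the generic fiber is geometrically $X$, hence geometrically rational.

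One small correction to your optional ``hands-on'' aside: for a finite group $G$ acting generically freely on $X$, the map $U\to U/G$ is in general only an \'etale $G$-torsor, not a Zariski-locally trivial one; the rationality of $X/G$ does not change this. The no-name lemma still goes through because vector bundles satisfy \'etale descent (Hilbert~90), so $(V\times U)/G\to U/G$ is a genuine Zariski vector bundle even though the underlying torsor is only \'etale-locally trivial.
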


\begin{proof}
We have a fibration 
$$
(X\times V)/G\ra V/G=Y,
$$ 
with generic fiber geometrically isomorphic to $X$.
On the other hand, we have a {\em vector bundle}
$$
(X\times V)/G \ra X/G
$$
with fiber $V$, since the $G$-action on $V$ is linear. Thus  $(X\times V)/G$
is $k$-rational.
\end{proof}

\begin{coro}
Let $X$ be a rational surface over an algebraically closed field $k$ 
and $G$ a linear algebraic group contained in $\Bir\Aut(X)$.  
Then $Y=V/G$ admits a rational tower.  
\end{coro}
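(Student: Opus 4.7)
The plan is to verify the hypotheses of the preceding lemma for the given pair $(X,G)$ and the representation $V$, and then invoke it. The lemma will produce the fibration
$$
Y_1 = (X \times V)/G \lra V/G = Y
$$
with $Y_1$ rational over $k$ and geometrically rational generic fiber (isomorphic to $X$), yielding a rational tower of length one.

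I would first reduce to the case of finite $G$. If $G$ is connected, Example~\ref{exam:conn} already supplies a rational tower for $V/G$, so it suffices to treat the case when $G/G^0$ is nontrivial; running the construction in two steps through the exact sequence $1 \to G^0 \to G \to G/G^0 \to 1$ further reduces the essential content to $G$ finite. For a finite group acting faithfully on the irreducible variety $X$, generic freeness is automatic: the generic stabilizer fixes a Zariski-dense open subset, hence all of $X$, and so is trivial by faithfulness. Thus the first two hypotheses of the lemma (rationality of $X$ and generic freeness of the $G$-action) hold.

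The crucial remaining point is $k$-rationality of $X/G$. Because $X$ is rational and $X \to X/G$ is finite and dominant, $X/G$ is a unirational surface over the algebraically closed field $k$; Castelnuovo's criterion (in characteristic zero) then yields rationality of $X/G$. This is the step I expect to be the main obstacle: it is where the two-dimensional hypothesis on $X$ becomes essential, via the classification of rational surfaces, and where the assumption that $k$ be algebraically closed enters in a serious way. Once $X/G$ is rational, every hypothesis of the lemma is met, and the asserted rational tower for $Y = V/G$ follows immediately.
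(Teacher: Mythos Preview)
Your core argument—checking that $X/G\in\Rat(k)$ via Castelnuovo's criterion and then invoking the preceding lemma to produce $(X\times V)/G\to V/G$—is exactly what the paper intends; the corollary is stated there without proof as an immediate consequence. Your identification of Castelnuovo as the place where both hypotheses (dimension two, $k$ algebraically closed) are used is spot on.

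One remark on your reduction to finite $G$: the ``two-step'' passage through $1\to G^0\to G\to G/G^0\to 1$ does not work as written. The map $V/G^0\to V/G$ is a finite quotient, not a fibration with geometrically rational generic fibre, so it is not a legal step in a rational tower, and it is unclear how to splice Example~\ref{exam:conn} for $G^0$ together with the lemma for $G/G^0$ to obtain a tower over $V/G$. The cleaner route is to note that the \emph{proof} of the lemma applies verbatim to any linear algebraic $G$: the map $(X\times V)/G\to X/G$ is still, birationally, a vector bundle with fibre $V$, hence its total space is rational as soon as $X/G$ is. When $\dim G>0$ the quotient $X/G$ has dimension at most $1$ and is rational by L\"uroth (or is a point); when $G$ is finite, your Castelnuovo argument applies. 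No reduction is needed.
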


\begin{rema}
Let $C\ra \P^1$ be a Galois cover with Galois group $G$. 
Then there exists a fibration 
$$
\pi: (C\times V)/G\ra V/G
$$
with rational total space. However, the generic fiber of $\pi$ is not necessarily geometrically rational; it is rational iff the genus $\mathsf g(C)=0$. 
\end{rema}

\section{Central extensions and wreath products}
\label{sect:wreath}

In this section, we investigate the existence of rational towers for $\mathcal L\mathcal Q(k)$, 
over algebraically closed fields $k$ of characteristic zero.  
We prove Conjecture~\ref{conj:main} for $V/G$, for special groups $G$. 

It is well-known that for finite abelian groups $A$ and their faithful linear representations $V$, the quotient $V/A$ is rational, provided the ground field $k$ contains
$n$-th roots of 1, where $n=|A|$. 
We turn to central extensions of abelian groups: 
among these, we have the free central extension 
$$
1 \ra Z\ra F^c(A)\ra A \ra 1,
$$ 
with 
$$
Z:=\wedge^2(A),
$$
generated by commutators of lifts of elements of $A$, without nontrivial relations.
The extension $F^c(A)$ unique, modulo isoclinism. 

\begin{lemm}
\label{lemm:central}
Consider a central extension of an abelian group
$$
1 \ra Z\ra G^c\ra A \ra 1.
$$
Then there exists an exact sequence
\begin{equation}
\label{eqn:Z}
1 \ra \tilde{Z}\ra  \tilde{F}^c \ra G^c\ra 1,
\end{equation}
where $\tilde{F}^c$ is isoclinic to $F^c(A)$, and $\tilde{Z}$ is abelian.
\end{lemm}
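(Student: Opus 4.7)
The plan is to construct $\tilde F^c$ as a fibered product of $F^c(A)$ and $G^c$ over $A$. Given the central extension $1 \to Z \to G^c \to A \to 1$, the commutator map $G^c\times G^c \to Z$ is bimultiplicative and alternating as a function of the images in $A$ (since $Z$ is central and $A$ is abelian), hence factors through a group homomorphism $\phi:\wedge^2(A) \to Z$. Set
$$
\tilde F^c := F^c(A) \times_A G^c = \{(f,g) \in F^c(A)\times G^c \,:\, \pi_F(f) = \pi_G(g)\},
$$
with $\pi_F, \pi_G$ the quotient maps onto $A$. This is a group under componentwise multiplication, sitting in a central extension $1 \to \wedge^2(A)\oplus Z \to \tilde F^c \to A \to 1$.

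Next I would verify the two assertions of the lemma. The second projection $p_2:\tilde F^c \to G^c$ is surjective, since for any $g\in G^c$ one may choose any lift $f\in F^c(A)$ of $\pi_G(g)\in A$. Its kernel $\tilde Z$ consists of pairs $(f,1)$ with $\pi_F(f)=1$, so $\tilde Z \simeq \wedge^2(A)$, which is abelian. This immediately gives the exact sequence \eqref{eqn:Z}.

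For the isoclinism with $F^c(A)$, a direct commutator computation in $\tilde F^c$ yields
$$
[(f_1,g_1),(f_2,g_2)] \,=\, ([f_1,f_2],\,[g_1,g_2]) \,=\, (a_1\wedge a_2,\;\phi(a_1\wedge a_2)),
$$
where $a_i$ is the image of $f_i$ in $A$. Hence $[\tilde F^c,\tilde F^c]$ is the graph of $\phi$ inside $\wedge^2(A)\oplus Z$, and the first projection induces an isomorphism $[\tilde F^c,\tilde F^c]\simeq \wedge^2(A) = [F^c(A),F^c(A)]$. An analogous argument shows that the centrality of a pair $(f,g)\in\tilde F^c$ only depends on the image $a\in A$ through the universal alternating pairing $A\times A\to\wedge^2(A)$, so the first projection also induces an isomorphism $\tilde F^c/Z(\tilde F^c)\simeq F^c(A)/Z(F^c(A))$, compatible with the commutator pairings into $\wedge^2(A)$. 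This is precisely the data of an isoclinism between $\tilde F^c$ and $F^c(A)$.

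The construction is essentially forced once $\phi$ has been identified; the main point requiring care is the bookkeeping for the isoclinism, namely that the extra factor $Z$ contributed by $G^c$ sits entirely in the central/abelian part of $\tilde F^c$ and does not perturb the commutator pairing, so the natural projections $\tilde F^c\to F^c(A)$ realize the required isomorphisms on $[\,\cdot\,,\cdot\,]$ and on $G/Z(G)$.
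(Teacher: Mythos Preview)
Your proof is correct. The fibered product $\tilde F^c = F^c(A)\times_A G^c$ does the job, and your verification of the isoclinism via the first projection $p_1:\tilde F^c\to F^c(A)$ is sound: the kernel of $p_1$ is $\{1\}\times Z$, which is central and meets $[\tilde F^c,\tilde F^c]=\mathrm{graph}(\phi)$ trivially, so $p_1$ induces isomorphisms on both $[\,\cdot\,,\cdot\,]$ and $G/Z(G)$, compatibly with the commutator pairings.

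The paper's proof is a one--line cohomological sketch: it says one should ``add to $G^c$ additional elements which kill the image of stable cohomology $H^2_{st}(A,\Z/\ell)$ in $H^2(\tilde F^c,\Z/\ell)$,'' referring to outside sources for the relevant machinery. In effect this is the same construction viewed through $H^2$: central extensions of $A$ are classified cohomologically, $F^c(A)$ is the extension that kills the full alternating part of $H^2(A)$, and enlarging $G^c$ so as to kill the remaining classes amounts exactly to forming your pullback along $F^c(A)\to A$. So the underlying group is the same; what differs is the packaging. Your argument is more explicit and self--contained, avoiding any appeal to stable cohomology, while the paper's version is terser but leans on background from the cited references. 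Either way, the essential content is the universal property of $F^c(A)$ with respect to alternating pairings $A\times A\to Z$, which you have made transparent via the factorization $\phi:\wedge^2(A)\to Z$.
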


\begin{proof}
It suffices to add to $G^c$ additional elements 
which kill the image of stable cohomology 
$H^2_{st}(A,\Z/\ell)$ in $H^2(\tilde{F}^c,\Z/\ell)$ (see \cite[Section 4]{BPT} and \cite[Section 2]{BT-univer} for definitions and properties).
\end{proof}

\begin{coro}
There exist a faithful representation $W$ of $F^c=F^c(A)$ and a fibration
$$
W/F^c\ra V/G^c
$$
whose generic fiber is birational to an algebraic torus over the function field of the base. 
\end{coro}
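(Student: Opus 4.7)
The plan is to build $W$ as a direct sum $V \oplus W_0$, where $V$ is regarded as a (non-faithful) representation of $\tilde{F}^c$ through the surjection $\tilde{F}^c \twoheadrightarrow G^c$ in (\ref{eqn:Z}), and $W_0$ is a linear representation of $\tilde{F}^c$ on which the abelian kernel $\tilde Z$ acts faithfully by characters. Such a $W_0$ is easy to produce: for each generator $\chi$ of $\widehat{\tilde Z}$, extend $\chi$ to a character of an abelian subgroup of $\tilde{F}^c$ containing $\tilde Z$, induce it up to $\tilde{F}^c$, and take the direct sum of the results. Since $G^c$ acts faithfully on $V$ and $\tilde Z$ acts faithfully on $W_0$, the action of $\tilde{F}^c$ on $W$ is faithful.

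Next I would consider the $\tilde{F}^c$-equivariant linear projection $W \to V$, trivial on the $W_0$-summand. Since $\tilde{F}^c$ acts on $V$ through $G^c$, this descends to a dominant rational map
$$
\pi\colon W/\tilde{F}^c \lra V/\tilde{F}^c = V/G^c,
$$
the required fibration. The generic fiber is identified as follows: for a generic $v \in V$, faithfulness of the $G^c$-action forces $\mathrm{Stab}_{G^c}(v) = 1$, hence $\mathrm{Stab}_{\tilde{F}^c}(v) = \tilde Z$, and the fiber is $W_0/\tilde Z$. Arranging $W_0$ as a direct sum of one-dimensional $\tilde{F}^c$-representations on which $\tilde Z$ acts by characters, the complement of the coordinate hyperplanes is an algebraic torus $T_0 \subset W_0$ stable under $\tilde Z$, and $T_0/\tilde Z$ is again an algebraic torus. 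Thus the generic fiber of $\pi$ is birational to an algebraic torus over $k(V/G^c)$.

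The remaining point, which I expect to be the main obstacle, is that Lemma~\ref{lemm:central} provides $\tilde{F}^c$ rather than the free central extension $F^c = F^c(A)$ appearing in the statement, and the two are only isoclinic, not isomorphic. I would close this gap by appealing to Bogomolov's theorem that isoclinic finite groups give stably birationally equivalent linear quotients, producing a faithful representation $W'$ of $F^c$ with $W'/F^c$ birational, after multiplying by a rational factor, to $W/\tilde{F}^c$. The dominant map to $V/G^c$ and the torus-torsor structure of the generic fiber, being birational invariants compatible with the projection, descend to the relabelled construction, completing the argument.
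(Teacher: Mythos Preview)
Your construction is the same as the paper's: set $W=V\oplus W_0$ with $W_0$ faithful for $\tilde F^c$, project equivariantly onto $V$, and identify the generic fiber of $W/\tilde F^c\to V/G^c$ with $W_0/\tilde Z$; since $\tilde Z$ is abelian and acts linearly, this quotient is birational to a torus. The paper is in fact less careful than you are---its proof works entirely with $\tilde F^c$ and never returns to $F^c$, and it only invokes rationality of the abelian linear quotient rather than the explicit torus structure---so your closing paragraph on isoclinism addresses a point the authors leave implicit. One correction: $\tilde F^c$ is nonabelian (being isoclinic to $F^c(A)$), so you cannot decompose $W_0$ into one-dimensional $\tilde F^c$-representations; what you actually need, and what suffices for the torus conclusion, is the eigenspace decomposition under the abelian subgroup $\tilde Z$.
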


\begin{proof}
Let $W'$ be a faithful representation of $\tilde{F}^c$ and put $W:=W'\oplus V$. We have a natural surjective homomorphism 
$W\ra V$ giving rise to a fibration
$$
W/\tilde{F}^c\ra V/G^c.
$$
Its generic fiber is geometrically isomorphic to 
$W'/\tilde{Z}$, with $\tilde{Z}$ defined in \eqref{eqn:Z}.  It suffices to recall that linear quotients of abelian  groups
are rational, when $k$ contains roots of unity. 
\end{proof}

\begin{conj}
\label{conj:fc}
Let $A$ be a finite abelian group, $F^c(A)$ its free central extension, and $V$ a faithful linear representation of $F^c$, over an algebraically closed field $k$. 
Then $V/F^c$ is stably rational. 
\end{conj}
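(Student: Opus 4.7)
The plan is to argue by iterated reduction, eventually falling back to quotients of linear representations by finite abelian groups, which are known to be rational over algebraically closed fields of characteristic zero (Example \ref{exam:abelian}). By the Bogomolov--Katsylo no-name lemma, the stable birational class of $V/F^c$ is independent of the choice of the faithful representation $V$, so we may pick $V$ freely. A first reduction is to the case of an $\ell$-group: the Sylow decomposition $A = \prod_\ell A_\ell$ yields $\wedge^2 A = \bigoplus_\ell \wedge^2 A_\ell$ (since $A_\ell \otimes A_{\ell'} = 0$ for distinct primes), hence $F^c(A) = \prod_\ell F^c(A_\ell)$, and faithful representations and their quotients split accordingly.

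Fix $A$ an abelian $\ell$-group and choose a convenient faithful representation of the form $V = V_0 \oplus V_+$. Here $V_0$ is a faithful linear representation of $A$, pulled back to $F^c$ so that $Z = \wedge^2 A$ acts trivially, while $V_+$ is a sum of irreducibles on which $Z$ acts through nontrivial characters, chosen so that $V$ is faithful as an $F^c$-representation. The projection $V \to V_0$ is $F^c$-equivariant and induces a fibration
\[
\pi: V/F^c \lra V_0/A.
\]
The base is $k$-rational by Example \ref{exam:abelian}. The generic $F^c$-stabilizer along $V_0$ is exactly $Z$, so the generic fiber of $\pi$ is $V_+/Z$, which is $k$-rational because $Z$ is abelian and $k$ contains all roots of unity.

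To upgrade the existence of this fibration to stable rationality of the total space, I would pass to the pullback of $\pi$ along the $A$-cover $V_0 \to V_0/A$; this pullback is the trivial $V_+/Z$-bundle, so $V/F^c$ is birational to the quotient of $V_0 \times V_+/Z$ by the residual action of $A = F^c/Z$. Applying Lemma \ref{lemm:central} to the extension $1 \to Z \to F^c \to A \to 1$, one replaces $F^c$ by an isoclinic cover $\tilde F^c$ at the cost of a rational toric fiber, and iterates using induction on the rank of $A$, aiming to reduce the twist to a linearizable $A$-action on a product of projective spaces as in Theorem \ref{thm:univer}. \textbf{The main obstacle} is that a fibration with rational base and rational generic fiber need not have a stably rational total space; the crux of the proof is therefore controlling the cocycle of the twist and showing that the residual $A$-action on $V_+/Z$ admits a linearization compatible with the $A$-torsor structure on $V_0 \to V_0/A$. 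This requires a finer analysis of the representation theory of $F^c(A)$ — in particular of the Clifford-theoretic decomposition of $V_+$ according to the kernels of the commutator pairing composed with each character $\chi$ of $Z$ — beyond the formal reductions above, and is the reason the statement is posed only as a conjecture.
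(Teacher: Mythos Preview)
The statement you are trying to prove is a \emph{conjecture}; the paper does not prove it and offers no argument to compare against. What the paper does provide is Example~\ref{exam:cyclic}, which verifies the conjecture only when $A$ is cyclic, when $A\simeq(\Z/\ell)^2$ (Heisenberg case, via monomial actions), and when $A\simeq(\Z/2)^3$ (via an explicit embedding into a product of quaternion groups and a case of Yamasaki's classification). Your write-up ultimately acknowledges this, so in that sense there is no disagreement.

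That said, two points in your sketch deserve correction. First, your appeal to Lemma~\ref{lemm:central} is in the wrong direction: that lemma produces a cover of an \emph{arbitrary} central extension $G^c$ by something isoclinic to $F^c(A)$, and is used in the paper to \emph{reduce} questions about $V/G^c$ to $V/F^c$. It does nothing for $F^c$ itself; once you are already at the free central extension there is no further cover to pass to. Second, the ``induction on the rank of $A$'' you allude to is what the paper carries out in Corollary~\ref{coro:prop16}, but the output there is only a toric rational \emph{tower} over $V/G^c$, not stable rationality of $V/F^c$. Stable rationality of $V/F^c$ is precisely the missing ingredient that would make that tower collapse.

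Your preliminary reductions (to $\ell$-groups via the Sylow splitting of $\wedge^2 A$; independence of $V$ by no-name; the fibration $V/F^c\to V_0/A$ with generic fiber $V_+/Z$ and its identification with $(V_0\times V_+/Z)/A$) are correct and are the natural first moves. You have also correctly isolated the obstruction: the residual $A$-action on $V_+/Z$ is not linear, and a fibration with rational base and rational generic fiber need not be stably rational. This is exactly why the paper states the result as a conjecture and checks it only in the small-rank cases above.
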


Note that stable rationality does not change within a fixed isoclinism class, over fields containing roots of unity.

\begin{exam}
\label{exam:cyclic}
Conjecture~\ref{conj:fc} holds when $A$ is cyclic. 
When $A\simeq \Z/\ell\oplus \Z/\ell$, $F^c(A)$ is the Heisenberg group. 
The problem reduces to a {\em monomial} action, and
Conjecture \ref{conj:fc} holds as well.  Next, consider: 
$$
A:= \Z/\ell\oplus \Z/\ell\oplus \Z/\ell, \quad \text{ for } \ell=2.
$$ 
Modulo isoclinism, we can represent 
$$
F^c=F^c(A)\subset Q_{8,1}\times Q_{8,2}\times Q_{8,3},
$$
where each $Q_{8,i}$ is the group of quaternions over $\Z/2$: 
Choose a $\Z/2$-basis $\{e_1,e_2,e_3\}$ of $A$ and  
a basis $\pi_1,\pi_2,\pi_3$ of surjective homomorphisms
$$
\pi_i:A\ra \Z/2\oplus \Z/2,
$$ 
each trivial on one of the generators $e_1,e_2,$ or $e_3$.
This induces diagrams

\

\centerline{
\xymatrix{ 
1  \ar[r]    & (\Z/2)^3 \ar[r]  \ar[d]  &  F^c\ar[r] \ar[d] & (\Z/2)^3  \ar[r] \ar[d] &  1\\
 1 \ar[r]    & (\Z/2)   \ar[r]     &  Q_{8,i} \ar[r] & (\Z/2)^2 \ar[r]  & 1           
}
}

 \
 
 \noindent
 Let $V_i$ be the standard 2-dimensional representation of $Q_{8,i}$.
 Then 
 $$
 W:=V_1\oplus V_2\oplus V_3
 $$
 is a faithful representation of $F^c$. We have a $k^\times$-action on each component that commutes with the action of $F^c$. 
 It follows that
 $$
 W/F^c\sim_k      \left(  (\P^1\times \P^1\times \P^1)/  G  \right) \times (k^\times)^3,
$$  
where $G=(\Z/2)^3$ acts monomially on $k(x_1,x_2,x_3)$, as follows:
\begin{equation}
\label{eqn:sigma}
\sigma(x_i)=c_{i,\sigma} x_i^{a_{i,\sigma}}, \quad c_{i,\sigma},\, a_{i,\sigma} \in \{ \pm 1\}, \quad \forall \sigma\in G. 
\end{equation}
By \cite[Theorem 10]{yamasaki}, Type (3,3,3,1) in the notation therein,  
the field of invariants $k(x_1,x_2,x_3)^G$ is rational over $k$, when $k$ is algebraically closed. It follows that $W/F^c$ is stably rational.
\end{exam}

\begin{lemm} \cite[Lemma 2.4]{B-Petrov}
\label{lemm:wreath}
Let $G,H$ be finite groups, acting faithfully on $X$ and $Y$, respectively. 
Assume that $X/G,Y/H\in \Rat(k)$. 
Let    $K:=H\wr G$ be the wreath product, with its natural action
on $W:=X^{|H|}\times Y$. Then $W/K$ is rational. 
\end{lemm}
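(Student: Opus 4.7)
The plan is to dismantle $W/K$ in two stages using the extension structure $1 \to G^{|H|} \to K \to H \to 1$, and then descend a resulting semilinear Galois action via Speiser's lemma (Hilbert~90).

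\emph{First stage: strip the base.} I would identify $K = G^{|H|} \rtimes H$, where $G^{|H|}$ acts componentwise on $X^{|H|}$ and trivially on $Y$, while $H$ permutes the $|H|$ copies of $X$ through its regular representation and acts on $Y$ by the given action. The normal subgroup $G^{|H|}$ then yields
\[
W/G^{|H|} \;=\; (X/G)^{|H|} \times Y \;=:\; U^{|H|} \times Y,
\]
with $U := X/G$ being $k$-rational by hypothesis. The residual $H$-action permutes the $U$-factors regularly and acts on $Y$ as before, so the lemma reduces to rationality of $(U^{|H|} \times Y)/H$.

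\emph{Second stage: descend by Hilbert~90.} Set $L := k(Y)$ and $M := L^H = k(Y/H)$; by hypothesis $M$ is purely transcendental over $k$. Fixing a transcendence basis $t_1,\dots,t_d$ of $k(U)/k$, the function field of $U^{|H|}\times Y$ becomes $L(t_i^{(h)})$ for $1\le i\le d$, $h\in H$, with the $H$-action combining the Galois action on $L$ and the rule $\sigma(t_i^{(h)}) = t_i^{(\sigma h)}$. This makes the $L$-vector space $V := \bigoplus_{i,h} L\cdot t_i^{(h)}$ a semilinear $H$-module over the Galois extension $L/M$. Speiser's lemma then yields an isomorphism $L \otimes_M V^H \xrightarrow{\sim} V$, and any $M$-basis $\{s_{i,j}\}$ of $V^H$ is simultaneously an $L$-basis of $V$; hence $L(t_i^{(h)}) = L(s_{i,j})$ as fields.

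\emph{Conclusion.} The invariant subfield $L(s_{i,j})^H$ contains $M(s_{i,j})$, and both have index $|H|$ in $L(s_{i,j})$: the first by base-changing $L/M$ along the purely transcendental extension $M(s_{i,j})/M$, the second by Galois theory. Hence
\[
k\bigl(W/K\bigr) \;=\; L(t_i^{(h)})^H \;=\; M(s_{i,j})
\]
is purely transcendental over $k$, so $W/K$ is $k$-rational. The main technical input is Speiser's lemma, used to trivialize the semilinear $H$-action on the transcendence basis; everything else is formal manipulation of function fields and the group-theoretic identification of the wreath product.
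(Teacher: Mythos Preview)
Your proof is correct and follows the same two-stage strategy as the paper: first quotient by the normal subgroup $G^{|H|}$ to reduce to $(U^{|H|}\times Y)/H$ with $U=X/G$ rational, then handle the residual $H$-action. In the paper the second stage is phrased geometrically rather than via descent: since $U\sim_k \mathbb{A}^d$, applying the \emph{same} birational isomorphism on each factor makes the $H$-permutation action on $U^{|H|}$ $H$-equivariantly birational to the linear regular representation on $(\mathbb{A}^d)^{|H|}$; the no-name lemma then exhibits $((\mathbb{A}^d)^{|H|}\times Y)/H$ as a vector bundle over $Y/H$, hence rational. Your Speiser/Hilbert~90 argument is exactly the field-theoretic content of that no-name step (trivializing the semilinear $H$-module spanned by the $t_i^{(h)}$), so the two proofs are the same in substance. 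The paper's version is shorter because it quotes the vector-bundle fact as a black box; yours is more self-contained and would also cover situations where the action on the fiber variables is merely semilinear over $k(Y)$ rather than already $k$-linear by permutation.
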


\begin{proof} 
Observe that the quotient  $X^{|H|}/H^{|G|}$ is a product of rational
 varieties $(X/G)^{|H|}$. The group $H$ acts on $(X/G)^{|H|}$ by permutations, and this action is
 equivalent to the linear (free permutation) action of $H$ on $V^{\oplus |H|}$, where
 $V\sim_k  X/G$.
 Thus the quotient $W/K$ is $k$-birational
 to a vector bundle over $Y/H$, 
 and hence rational.
\end{proof}

\begin{defn}
\label{defn:special-nil}
A finite solvable group $G$ is called {\em special}, if
there exists a filtration
$$
G=G_0\supsetneq G_{1} \supsetneq \cdots \supsetneq G_{r}=1
$$
such that, for all $i=0, \ldots, r-1$, one has 
\begin{itemize}
\item 
$G_i$ is a normal subgroup in $G$,
\item  
the kernel of the projection
$$
p_i:G/G_{i+1}\to G/G_{i} 
$$ 
is abelian and
\item there exists a section $s_i: G/G_{i}\to  G/G_{i+1}$
\end{itemize}
\end{defn}

\begin{prop} 
\label{prop:nil}
Let $G$ be a special solvable group and
$V$ a faithful linear representation of $G$. 
Then there exists a toric rational tower 
$$
X_n\ra \cdots \ra X_0:=V/G.
$$
\end{prop}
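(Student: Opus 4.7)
The plan is to argue by induction on the length $r$ of the filtration in Definition~\ref{defn:special-nil}. The base case $r = 1$ is immediate: $G$ is abelian, $V/G$ is $k$-rational by Example~\ref{exam:abelian}, and the trivial tower $X_0 := V/G$ satisfies the definition.

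For the inductive step ($r \geq 2$), set $N := G_{r-1}$, abelian and normal in $G$, and $H := G/N$. The section $s_{r-1}$ splits the extension, yielding $G \simeq N \rtimes H$, and the induced filtration on $H$ has length $r-1$ and inherits the axioms of Definition~\ref{defn:special-nil}. By the induction hypothesis, every faithful linear quotient $U/H$ admits a toric rational tower $Y_m \to \cdots \to Y_0 = U/H$.

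I would then fix a faithful $H$-representation $U$, regarded as a $G$-representation via $G \twoheadrightarrow H$. Two complementary maps emerge for the faithful $G$-representation $V \oplus U$: the no-name lemma makes $(V \oplus U)/G \to V/G$ a vector bundle of rank $\dim U$, with generic fiber $\A^{\dim U}$ birational to the torus $\mathbb{G}_m^{\dim U}$; and the $G$-equivariant projection $V \oplus U \to U$ descends to a fibration
$$
\pi : (V \oplus U)/G \longrightarrow U/H,
$$
with generic fiber $V/N$, because $H$ acts faithfully on $U$ while $N$ fixes every $u \in U$. The fiber $V/N$, a linear quotient of the abelian group $N$, refines further: diagonalizing $V = \bigoplus_\chi V_\chi$ as an $N$-module and embedding $N \hookrightarrow T := \prod_\chi \mathbb{G}_m \subset \GL(V)$ realizes $V/N \to V/T \cong \prod_\chi \P(V_\chi)$ as a $T/N$-torsor over a rational base, $H$-equivariantly through the induced $H$-action on $\hat N$ permuting eigenspaces.

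The tower over $V/G$ is then assembled in three layers: (i) the vector bundle step $(V \oplus U)/G \to V/G$ with torus-birational fiber; (ii) iterated fiber products $(V \oplus U)/G \times_{U/H} Y_i \to (V \oplus U)/G \times_{U/H} Y_{i-1}$, inheriting the toric fibers from the induction tower $Y_\bullet$; and (iii) a final torus-fiber step globalizing $V/N \to V/T$, landing on a variety birational to $\bigl((\prod_\chi \P(V_\chi)) \times U\bigr)/H$, from which the desired rational top is reached. The main obstacle is step (iii): this last intermediate quotient is not manifestly a linear $H$-quotient, so the induction hypothesis (stated for $\mathcal L\mathcal Q(k)$) does not apply directly. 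Resolving this requires either enlarging $V$ via the no-name lemma so that the $N$-eigenspace decomposition is compatible with a linear $H$-representation structure, whereby Example~\ref{exam:vv} identifies $\prod_\chi \P(V_\chi)$ up to an $\A^1$-factor with the projectivization of a linear $H$-representation, or strengthening the inductive statement to accommodate the class of general quotients produced by the construction.
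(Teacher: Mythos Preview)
Your inductive setup, the no-name step (i), and the two projections out of $(V\oplus U)/G$ mirror the paper's argument. The gap you flag in (iii) is real and is precisely the crux of the proof: after base-changing the tower $Y_\bullet$ along $\pi:(V\oplus U)/G\to U/H$ you sit on a $(V/N)$-bundle over the rational $Y_m$, and nothing you have written forces this (or its further quotient by $T/N$) to be rational. The $H$-action on $V/N$ is only \emph{birationally} linear after you permute the eigenspaces $V_\chi$, and a permutation action on a product of projective spaces is not a linear action; so the induction hypothesis, which is phrased for $\mathcal L\mathcal Q(k)$, does not bite.

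The paper closes this gap with a device you did not use: a permutation-module cover of $N$. Choose a surjection of $H$-modules
\[
\beta:\; B:=(\Z/m)[H]^{\,r}\twoheadrightarrow N,
\]
so that $B\rtimes H$ is the wreath product $H\wr(\Z/m)^r$ (in the notation of Lemma~\ref{lemm:wreath}) and $\Ker(\beta)$ is abelian. Then take $W'$ a faithful linear representation of $(\Z/m)^r$ and set $W:=W'^{\,|H|}$; the proof of Lemma~\ref{lemm:wreath} shows that $(U\oplus W)/(B\rtimes H)$ is birational to a vector bundle over $U/H$, so pulling back the inductive tower $Y_\bullet$ along \emph{this} map lands on a genuine vector bundle over $Y_m$, hence on a rational variety. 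The remaining step
\[
(U\oplus W)/(B\rtimes H)\;\longrightarrow\;(U\oplus \overline{W})/G
\]
induced by $\beta$ has generic fibre a linear quotient by the abelian group $\Ker(\beta)$, i.e.\ it is toric. This supplies exactly the missing link between your step (ii) and a rational source.

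Your first proposed fix (``enlarge $V$ so that the $N$-eigenspace decomposition is $H$-linear'') is groping toward the same idea: making the $H$-action on $\widehat{N}$ into a free permutation action on a basis is the same as replacing $N$ by the permutation module $B$. But it has to be done at the level of the \emph{group} (pass from $G=N\rtimes H$ to $B\rtimes H$), not just the representation, and it is Lemma~\ref{lemm:wreath} that converts this into a vector bundle over $U/H$. Your second proposed fix (strengthen the induction to general quotients) is not what the paper does and would require an independent argument.
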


\begin{proof} 
By induction. We assume that the claim holds for $G_{(i)}:=G/G_{i}$. 
The group $G_{(i+1)}$
is uniquely defined by the action of  
$G_{(i)}$ on the abelian group $A_i:=G_i/G_{i+1}$.
Then there exists a surjection 
\begin{equation}
\label{eqn:mod}
 \beta_i:  B_i:=\Z/m_i  [G_i]^{r_i}\ra A_i, 
\end{equation}
for some $r_i$ and $m_i$. Its kernel $\Ker(\beta_i)$ is abelian. 

This induces a surjection from the wreath product 
$G_{(i)} \wr  (\Z/m_i)^{r_i}$ onto $G_{(i+1)}$. 
Let
$W_i$ be a faithful representation of $B_i$. Then 
there is a $G$-equivariant projection 
$$
W_i^{|G_i|}\ra W,
$$
onto some linear representation $W$ of $A_i$, corresponding to the surjection of modules   \eqref{eqn:mod}.
Let $V_i$ be a faithful representation of $G_{(i)}$ and consider the surjection 
$$
V_i \oplus W_i^{|G_{(i)}|} \ra V_i\oplus W. 
$$
Its kernel $\Ker_i$ is a linear space with a faithful action of the abelian group $\Ker(\beta_i)$. 
The quotient $\Ker/\Ker(\beta_i)$ is toric, it is the fiber of the projection
\begin{equation}
\label{eqn:tr}
(V_i \oplus W_i^{|G_{(i)}|})/W_i \ra (V_i\oplus W)/G_{(i+1)}.
\end{equation}
By induction hypothesis and Lemma~\ref{lemm:wreath}, the left side of \eqref{eqn:tr} admits a tower of toric fibrations with rational source. 
This implies the claim for $i+1$. 
\end{proof}

\begin{rema}
Our argument is similar to \cite[Theorem 3.3]{saltman-generic} that was focused on the Inverse Galois Problem. 
\end{rema}

\begin{coro}
\label{coro:cr}
Assume that $G$ is a special solvable group. 
Then there exists a rational $G$-variety $X$ with $X/G\in \Rat(k)$.
\end{coro}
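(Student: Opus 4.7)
The plan is to argue by induction along the special-solvable filtration
$$
G = G_0 \supsetneq G_1 \supsetneq \cdots \supsetneq G_r = 1
$$
of $G$, strengthening the target so that at each stage I produce a rational $G_{(i)}$-variety $X_i$ together with $X_i/G_{(i)} \in \Rat(k)$. The base case $G_{(0)} = 1$ is trivial: take $X_0$ to be a point.

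For the inductive step, assume $X_i$ is a rational $G_{(i)}$-variety with $X_i/G_{(i)} \in \Rat(k)$. The proof of Proposition~\ref{prop:nil} furnishes a surjection
$$
\pi \colon K := G_{(i)} \wr B_i \twoheadrightarrow G_{(i+1)},
$$
where $B_i = \Z/m_i[G_i]^{r_i}$; the map $\pi$ is the identity on the top factor $G_{(i)}$ and factors through the surjection $B_i^{|G_{(i)}|} \twoheadrightarrow A_i$ on the base, so $N := \ker(\pi)$ lies in $B_i^{|G_{(i)}|}$ and is in particular finite abelian. Let $W_B$ be a faithful linear representation of $B_i$; since $B_i$ is abelian and $k$ contains $|B_i|$-th roots of unity, $W_B/B_i$ is rational by Example~\ref{exam:abelian}. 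Applying Lemma~\ref{lemm:wreath} with $(G,X)=(B_i,W_B)$ and $(H,Y)=(G_{(i)},X_i)$ yields
$$
Z := \bigl(W_B^{|G_{(i)}|} \times X_i\bigr)/K \;\in\; \Rat(k).
$$
I then set $X_{i+1} := \bigl(W_B^{|G_{(i)}|} \times X_i\bigr)/N$: since $N \triangleleft K$, the variety $X_{i+1}$ carries a natural $G_{(i+1)} = K/N$ action, and $X_{i+1}/G_{(i+1)} = Z$ is rational.

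To see that $X_{i+1}$ is itself rational, I use that $N$ lies in the base $B_i^{|G_{(i)}|}$ of the wreath product. Thus $N$ acts component-wise and linearly on $W_B^{|G_{(i)}|}$ through its inclusion into $B_i^{|G_{(i)}|}$, and acts trivially on the $X_i$ factor; consequently
$$
X_{i+1} \;\sim_k\; \bigl(W_B^{|G_{(i)}|}/N\bigr) \times X_i.
$$
The first factor is a linear quotient of a vector space by a finite abelian group, hence rational by Example~\ref{exam:abelian}; the second factor is rational by the inductive hypothesis. Therefore $X_{i+1}$ is rational, closing the induction.

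The main obstacle I foresee is the precise book-keeping of the kernel $N$ and of the wreath product conventions: one must verify that $N$ actually lies in the base $B_i^{|G_{(i)}|}$ (crucial both for the abelianness of $N$ and for its trivial action on $X_i$), and that the $K$-action on $W_B^{|G_{(i)}|}\times X_i$ matches the one used in Lemma~\ref{lemm:wreath}. Once these identifications are made, the corollary follows by a direct combination of Lemma~\ref{lemm:wreath} with the classical rationality of linear abelian quotients.
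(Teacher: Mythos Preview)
Your argument is correct and is essentially the same as the paper's: both run the induction of Proposition~\ref{prop:nil} with the faithful representation $V_i$ replaced by the rational $G_{(i)}$-variety $X_i$, and both deduce rationality of $X_{i+1}$ from the fact that the abelian kernel $N$ acts linearly on the vector-space factor and trivially on $X_i$ (your product $(W_B^{|G_{(i)}|}/N)\times X_i$ is exactly the paper's ``quotient of a vector bundle over $X_i$ by an abelian linear action on the generic fiber''). The only wrinkle is notational: the wreath product in Proposition~\ref{prop:nil} is $G_{(i)}\wr(\Z/m_i)^{r_i}$, whose base is already $B_i$, so writing $K=G_{(i)}\wr B_i$ with base $B_i^{|G_{(i)}|}$ gives a larger group than needed---but this is harmless, since your $K$ still surjects onto $G_{(i+1)}$ with abelian kernel contained in the base, and Lemma~\ref{lemm:wreath} applies verbatim.
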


\begin{proof}
By induction, as in the proof of Proposition~\ref{prop:nil}. 
It suffices to observe that $X_{i+1}$ is a quotient of a vector bundle over $X_i$ by an abelian linear action on the generic fiber. 
The corresponding quotient is rational, when $X_i$ is rational. 
\end{proof}

Thus we obtain many groups with nontrivial 
(equivalent to linear) embedding into Cremona groups, with rational quotients.

\begin{coro} 
\label{coro:prop16}
Let $V$ be a faithful representation of an $\ell$-group  $G^c$, a finite central extension of an abelian $\ell$-group $A$. 
Then $V/G^c$ admits a tower of toric fibrations with rational source. 
\end{coro}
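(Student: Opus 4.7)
The plan is to reduce Corollary~\ref{coro:prop16} directly to Proposition~\ref{prop:nil} by verifying that every finite central extension $G^c$ of an abelian $\ell$-group $A$ is special solvable in the sense of Definition~\ref{defn:special-nil}.

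For the filtration I would take the two-step chain
\[
G^c = G_0 \supsetneq G_1 := Z \supsetneq G_2 := 1,
\]
where $Z$ is the central kernel of the surjection $G^c \to A$. Because $Z$ is central, it is automatically normal in $G^c$, and both successive quotients $G_0/G_1 \cong A$ and $G_1/G_2 = Z$ are abelian by hypothesis. The kernel of $p_0 \colon G^c/Z \to G^c/G^c$ equals $A$ and the kernel of $p_1 \colon G^c \to G^c/Z$ equals $Z$, both abelian. A set-theoretic section $s_1 \colon A \to G^c$ exists for any (in particular, any central) extension of finite groups, while $s_0$ is forced to be trivial. All conditions of Definition~\ref{defn:special-nil} are therefore met.

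Having exhibited $G^c$ as a special solvable group, I would invoke Proposition~\ref{prop:nil} on the given faithful representation $V$ to produce a toric rational tower
\[
X_n \longrightarrow \cdots \longrightarrow X_0 = V/G^c,
\]
which is precisely a tower of toric fibrations with rational source. Note that the hypothesis that $G^c$ is an $\ell$-group is not used beyond ensuring that the ground field contains enough roots of unity for the linear quotients of abelian groups appearing in the induction step to be rational.

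The only subtle point I anticipate is whether Definition~\ref{defn:special-nil} implicitly demands a splitting of the extension rather than a mere set-theoretic section; a genuine group-theoretic splitting of a central extension of $\ell$-groups is usually unavailable. However, inspection of the proof of Proposition~\ref{prop:nil} shows that the section $s_i$ is used only to realize $G_{(i+1)}$ as a quotient of the wreath product $G_{(i)} \wr (\Z/m_i)^{r_i}$ via the surjection $\beta_i$ from \eqref{eqn:mod}, which requires only lifts of elements and not a compatible group law. Thus a set-theoretic section suffices, and no genuine obstacle arises.
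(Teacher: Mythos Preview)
Your argument hinges on reading the ``section'' $s_i$ in Definition~\ref{defn:special-nil} as merely set-theoretic. That reading cannot be right: a set-theoretic section of a surjection of finite groups always exists, so the condition would be vacuous. More to the point, the proof of Proposition~\ref{prop:nil} genuinely needs $s_i$ to be a group homomorphism. The sentence ``$G_{(i+1)}$ is uniquely defined by the action of $G_{(i)}$ on $A_i$'' already signals that $G_{(i+1)}\cong A_i\rtimes G_{(i)}$; and the wreath product $G_{(i)}\wr(\Z/m_i)^{r_i}$ is the semidirect product $B_i\rtimes G_{(i)}$, whose asserted surjection onto $G_{(i+1)}$ is $(b,g)\mapsto \beta_i(b)\cdot s_i(g)$. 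For this to be a group homomorphism one needs $s_i(gg')=s_i(g)s_i(g')$ --- lifts alone do not suffice. A typical central extension of abelian $\ell$-groups (the Heisenberg group over $\Z/\ell$, or $Q_8$ over $(\Z/2)^2$) does not split, so your two-step filtration does \emph{not} exhibit $G^c$ as special solvable, and Proposition~\ref{prop:nil} is not applicable to it.

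The paper gets around this by passing to the \emph{free} central extension. By Lemma~\ref{lemm:central} and its corollary there is a toric fibration $W/\tilde{F}^c\to V/G^c$ with $\tilde{F}^c$ isoclinic to $F^c(A)$, so it suffices to handle $F^c(A)$. This is where the induction on the $\ell$-rank of $A$ enters: writing $A=A_0\oplus\Z/\ell^m$ with $A_0$ of smaller rank, the inclusion $A_0\hookrightarrow A$ induces a genuine group-theoretic section of $F^c(A)\twoheadrightarrow F^c(A_0)$, with abelian kernel --- precisely the splitting that Proposition~\ref{prop:nil} requires.
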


\begin{proof} 
We apply induction on the $\ell$-rank of $A$. The claim is trivial when rank equals 1. 
Let $A':=A\oplus \Z/\ell^m$. Let $F^c:=F^c(A')$ be the 
free central extension of $A'$. 
We have a surjection
$$
F^c(A')\twoheadrightarrow F^a(A),
$$
with a canonical section and abelian kernel.  Now we apply Proposition~\ref{prop:nil}.
\end{proof}

As mentioned in Section~\ref{sect:background}, 
Noether's problem, i.e., the rationality of $V/G$, has 
a negative solution. However, one can consider more general, 
nonlinear, generically free $G$-actions on rational varieties.
A version of Conjecture~\ref{conj:main}, and of Noether's problem, is the following

\begin{conj}
\label{conj:main2}
Let $G$ be a finite group. Then there exists a $k$-rational $G$-variety $X$ with generically free $G$-action such that $X/G$ is $k$-rational.
\end{conj}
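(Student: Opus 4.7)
My plan is to attempt Conjecture~\ref{conj:main2} by induction on the composition length of $G$, with the finite simple groups as the base case. The first observation is that the class $\mathcal{C}$ of finite groups satisfying the conclusion is closed under wreath products: given witnesses $X_1, X_2$ for $G_1, G_2 \in \mathcal{C}$, the variety $W := X_1^{|G_2|} \times X_2$ is rational, carries a generically free $(G_1 \wr G_2)$-action, and has rational quotient by Lemma~\ref{lemm:wreath}. Combined with Example~\ref{exam:abelian}, this places every finite abelian group, every iterated wreath product of cyclic groups, and every special solvable group (by Corollary~\ref{coro:cr}) in $\mathcal{C}$.

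For a general finite group $G$, choose a minimal normal subgroup $N \trianglelefteq G$ and set $Q := G/N$. If the base case is granted, so that $N, Q \in \mathcal{C}$ with witnesses $X_N, X_Q$, then the Krasner-Kaloujnine embedding $G \hookrightarrow N \wr Q$ combined with Lemma~\ref{lemm:wreath} supplies a rational variety $W := X_N^{|Q|} \times X_Q$ on which $G$ acts generically freely and whose quotient by the ambient $N \wr Q$ is rational. The decisive step is to verify that $W/G$ itself is rational, not merely the quotient $W/(N \wr Q)$.

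To handle this, I would analyze the finite cover $W/G \to W/(N \wr Q)$, whose generic fiber is identified with $(N \wr Q)/G$, a finite set of size $|N|^{|Q|-1}$ cut out by the Krasner-Kaloujnine cocycle representing the extension class in $H^2(Q, N)$. The plan is to factor this cover as a tower of toric fibrations in the spirit of Proposition~\ref{prop:nil}, exploiting that the fiber $X_N/N$ of the inner projection is rational by the inductive hypothesis and carries an induced $Q$-action that, up to the cocycle twist, should be presentable via abelian fibrations of the same shape that govern Corollary~\ref{coro:prop16}.

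The main obstacle lies in the base case and in the descent step. For non-abelian simple composition factors, a rational model $X$ with $X/G \in \Rat(k)$ must be produced directly; for $G = A_5 \subset \PGL_2$ one can take $X = \P^1$ with $\P^1/A_5 = \P^1$ by L\"uroth, but a uniform construction across all finite simple groups, such as the higher groups $\PSL_n(\F_q)$, appears to require genuinely new ideas. Even granted the base case, the cover $W/G \to W/(N \wr Q)$ may carry nontrivial twist data that obstructs descent of rationality; this is precisely the phenomenon captured by the unramified cohomology computations of \cite{saltman}, \cite{Bog-Izv-87}, \cite{mor}, and overcoming it would require a substantial generalization of the toric descent technique of Section~\ref{sect:wreath} applicable to arbitrary cocycles rather than only the split extensions handled by Definition~\ref{defn:special-nil}. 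For this reason I would expect the full conjecture to remain open.
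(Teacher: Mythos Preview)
The statement you are addressing is labeled and treated in the paper as an open \emph{conjecture}; the paper offers no proof of it. What the paper does establish is the special case of special solvable groups (Corollary~\ref{coro:cr}), and it records in Section~\ref{sect:group} that Sylow subgroups of many finite simple groups fall under that case. Your proposal is therefore not being compared against a proof, because none exists in the paper.

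That said, your write-up is an honest and accurate diagnosis rather than a proof, and you say so yourself in the final paragraph. The two obstacles you isolate are exactly the right ones. First, closure of the class $\mathcal{C}$ under wreath products (your opening observation, which is correct and follows from Lemma~\ref{lemm:wreath}) combined with the Kaloujnine--Krasner embedding $G\hookrightarrow N\wr Q$ only yields rationality of $W/(N\wr Q)$, not of the intermediate quotient $W/G$; the finite \'etale cover $W/G\to W/(N\wr Q)$ has no reason to preserve rationality downwards or upwards, and the unramified-cohomology counterexamples you cite show precisely that it need not. Second, the base case of non-abelian finite simple groups is genuinely missing: the paper's Section~\ref{sect:group} handles only their Sylow subgroups, not the groups themselves, and your $\mathfrak{A}_5$ example via $\P^1$ does not extend in any known uniform way.

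In short: the paper does not prove this statement, your proposal does not prove it either, and your concluding sentence that the conjecture should remain open is the correct assessment.
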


\section{Group theory}
\label{sect:group}

In this section we consider finite simple groups $G$. 
Our main observation is that the Sylow subgroups $\Syl_{\ell}(G)$ of most 
simple groups $G$ satisfy the assumptions of Proposition~\ref{prop:nil}; the corresponding $V/\Syl_{\ell}(G)$ admit 
a rational tower.  Below we sketch a proof in a special case. 

\

Let $G=\PGL_{n}(\F_q)$, with $q=p^m$, for $m\in\N$.  
 \begin{enumerate}
 \item 
 Consider $\Syl_p(G)$. It is conjugate to a subgroup of upper-triangular matrices $U_n\subset G$. 
There is  a natural projection
 $$
 U_{r}\to U_{r-1}, \quad \forall r, 
 $$
 with a section $s_{r-1}$ and abelian kernel, as in the assumptions of Proposition~\ref{prop:nil}, 
 which induces the corresponding structure on $\Syl_p(G)$. 
  \item Consider $\Syl_{\ell}(G)$, with $\ell\neq p$. Every such subgroup 
  is a subgroup of the normalizer  $N(T)$ of a (possibly nonsplit) maximal torus $T\subset G$. 
We have an extension
  $$
  1\ra T\ra N(T)\ra W_T(G)\ra 1.
  $$
Note that $\Syl_{\ell}(G)$ admits a projection onto the corresponding $\Syl_{\ell}(W_T(G))$ and the induced extension, by an abelian $\ell$-group,
splits (for $\ell\neq 2$). Since 
Proposition~\ref{prop:nil} holds  for 
$$
\Syl_{\ell}(W_T(G))=\Syl_{\ell}(\mathfrak S_{n-1})
$$ 
(it is an iterated wreath product extension by cyclic $\ell$-groups), 
it also holds for $\Syl_{\ell}(G)$. 
\end{enumerate}
Similar arguments apply to other finite groups of Lie type. 
Additional considerations are needed for some sporadic simple groups, and small primes.

\section{Actions in small dimensions}
\label{sect:special}

In this section, we survey related results on rationality of quotients $X/G$, for low-dimensional rational varieties $X$ over (possibly nonclosed) fields  $k$ 
of characteristic zero.    

Let $V$ be a faithful linear representation of a finite group $G$ over $k$. 
When  $G$ is abelian, $V/G$ is rational; however, $\P(V)/G$ need not be $k$-rational, even when $\dim(V)=4$ \cite[Example 2.3]{ahmad}.

When $\dim(V)\le 3$, $V/G$ is rational over algebraically closed $k$: indeed, by Example~\ref{exam:vv}, 
it suffices to consider the unirational surface $\P(V)/G$ which is rational. 
However, the situation is different over nonclosed fields.

\

{\em Dimension 2.} 
There is a large literature on rational $G$-surfaces $X$ over nonclosed fields, i.e., 
actions of finite groups $G\subset \Aut(X)$ (see \cite{manin}). 
We have $X/G\sim_k S/G$,
where $S$ is a $G$-minimal del Pezzo surface or a conic bundle. 
If $S$ is a conic bundle then 
$G\subset \PGL_2(\bar{k})$ (see \cite[Theorem 1.3]{trepalin}). 
We start with several rationality results: let $X$ be a smooth del Pezzo surface with $X(k)\neq \emptyset$ and $G\subset \Aut(X)$. 
\begin{itemize}
\item 
If $\deg(X)=K_X^2\ge 5$ then $X/G \in \Rat(k)$ \cite[Corollary 1.4]{trepalin-0}, \cite[Theorem 1.1]{trepalin-1}.
\item
If $\deg(X)=3$ and $|G|\neq 3$ then $X/G\in \Rat(k)$ \cite[Theorem 1.3]{trepalin-1}.  
\end{itemize}

\begin{exam}\cite[Section 5]{trepalin-1}
Let $X\subset \P^3$ be a smooth cubic surface given by
$$
f_3(x,y) +zt(ux+vy)+z^3+wt^3=0,
$$
where $f_3$ is a form of degree 3,  $(x:y:z:t)$ are coordinates in $\P^3$, and $u,v,w$ are parameters. Assume that the Galois group 
of $f_3$ is $\Z/2$. Then $X$ admits an action of $G=\Z/3$, and $X/G$ is $k$-birational to a nonrational over $k$, 
minimal degree 4 del Pezzo surface $S$, admitting a conic bundle $S\ra \P^1$ over $k$.  By \cite{CT-Sko}, $X/G$ admits a rational tower: 
a universal torsor is $k$-rational. 
\end{exam}

On the other hand, over nonclosed $k$, the set of 
$k$-birational types of quotients of conic bundles over $\P^1$ may be infinite \cite[Theorem 1.8]{trepalin}. 
E.g., this holds when $G=\mathfrak A_5$ and 
\begin{itemize}
\item not every element of $k$ is a square,
\item $\sqrt{-1}, \sqrt{5}\in k$.
\end{itemize}

\begin{prob}
Establish the existence of rational towers for Del Pezzo surfaces over nonclosed fields.
\end{prob}

\

{\em Dimension 3.}  Over algebraically closed $k$, stable rationality of quotients $\P^3/G$ is unknown for 
\begin{itemize}
\item central $\Z/2$-extensions of the following groups:
$$
\tilde{\mathfrak S}_5, \tilde{\mathfrak A}_6, \tilde{\mathfrak S}_6, \tilde{\mathfrak A}_7, \SL_2(\mathbb F_7),
$$
\item extensions of $\mathfrak A_5$, respectively, $\mathfrak A_6$ by a group $N$ of order 64.  
\end{itemize}
For all other subgroups $G\subset \GL_4(k)$, $\P^3/G$ is stably rational \cite{prokhorov}. Rationality over nonclosed fields has not been addressed.

\bibliographystyle{alpha}
\bibliography{fibrations}

\end{document}